\newlist{steps}{enumerate}{1}
\setlist[steps, 1]{label = Step \arabic*:}
\newtheorem{theorem}{Theorem}
\newtheorem{proposition}[theorem]{Proposition}
\newtheorem{lemma}[theorem]{Lemma}
\newtheorem{remark}[theorem]{Remark}
\theoremstyle{definition}
\newcommand{\G}{\mathcal{G}}
\newcommand{\HH}{\mathrm{H}}
\newcommand{\FF}{\mathcal{F}}
\newcommand{\TT}{\mathcal{T}}
\renewcommand{\S}{\mathcal{S}}
\newcommand{\Hom}{\mathrm{Hom}}
\newcommand{\ww}{\mathbf{w}}
\newcommand{\X}{\mathbf{X}}
\newcommand{\Y}{\mathbf{Y}}
\tikzstyle{p}+=[fill=black, circle, minimum width = 1pt, inner sep =
\tikzstyle{w}+=[fill=white, draw, circle, minimum width = 1pt, inner sep =
\begin{document}

\title{Sidorenko's conjecture for higher tree decompositions}

\author{
David Conlon\thanks{Mathematical Institute, Oxford OX2 6GG,
United Kingdom. Email: {\tt david.conlon@maths.ox.ac.uk}. Research
supported by a Royal Society University Research Fellowship.}
\and
Jeong Han Kim\thanks{School of Computational Sciences,  Korea Institute for Advanced Study (KIAS),  Seoul, South Korea. Email: {\tt jhkim@kias.re.kr.}~
The author was supported by the National Research Foundation of Korea (NRF) Grant funded by the Korean Government (MSIP) (NRF-2012R1A2A2A01018585) and KIAS internal Research Fund CG046001.}
\and
Choongbum Lee\thanks{Department of Mathematics,
MIT, Cambridge, MA 02139-4307. Email: {\tt cb\_lee@math.mit.edu.} Research supported  
by NSF Grant DMS-1362326.}
\and
Joonkyung Lee\thanks{
Mathematical Institute, University of Oxford, OX2 6GG, United Kingdom. 
Email: {\tt Joonkyung.Lee@maths.ox.ac.uk.} Supported by ILJU Foundation of Education and Culture.
}
}

\date{}

\maketitle

This is a companion note to \cite{CKLL15}, elaborating on a remark in that paper that the approach which proves Sidorenko's conjecture for strongly tree-decomposable graphs may be extended to a broader class, comparable to that given in Szegedy's work~\cite{Sz15}, through further iteration.

Very roughly, strongly tree-decomposable graphs are those that can be decomposed into trees in a certain tree-like way, a decomposition that facilitates the application of the entropy methods introduced to the study of Sidorenko's conjecture in \cite{LSz12} and developed further in \cite{KLL14}. In this note, we show that this idea can be iterated. For instance, graphs which can be decomposed into strongly tree-decomposable pieces in a tree-like way also satisfy Sidorenko's conjecture. Further iteration then results in a broad class of graphs satisfying the conjecture.

Before getting into the definition of these higher-order strong tree decompositions, we describe a probabilistic lemma that will
enable the use of the entropy calculus.
It will also help to explain why our definition is a natural one.

Let $\FF$ be a family of subsets in $[k]:=\{1,2,\cdots,k\}$.
Partly motivated by the notion of tree decomposition,
we define a \emph{Markov tree} on $[k]$ to be a pair $(\FF,\TT)$ consisting of a tree $\TT$ on vertex set $\FF$ that satisfies
\begin{enumerate}
	\item $\bigcup_{F\in\mathcal{F}}F=[k]$ and
	\item for $A,B,C\in \mathcal{F}$, $A\cap B\subseteq C$ 
whenever $C$ lies on the path from $A$ to $B$ in $\TT$.
\end{enumerate}
Let $V$ be a finite set and, for each $F \in \mathcal{F}$, let $(X_{i;F})_{i\in F}$ be a random vector indexed by pairs 
	$(i,F)$ with $i\in F$,
	taking values on $V^{F}$.
We are interested in collections of random vectors where `local' information 
is `globally' extendible.
More precisely, we want there to exist random variables $Y_1,Y_2,\cdots,Y_k$ such that, for each $F\in \FF$, the random vectors $(Y_i)_{i\in F}$ and $(X_{i;F})_{i\in F}$ are identically distributed over $V^{F}$.
If such $Y_1,\cdots,Y_k$ exist, then $(X_{i;A})_{i\in A\cap B}$ and $(X_{j;B})_{j\in A\cap B}$ must be identically distributed.
Our main probabilistic lemma states that the converse is also true and, moreover, the maximum entropy under such constraints can be attained. 

\begin{theorem}[\cite{L17}]\label{thm:tree_entropy}
	Let $(\FF,\TT)$ be a Markov tree on $[k]$ and 		let $(X_{i;F})_{i\in F}$ be random vectors taking values on a finite set $V^F$
	for each $F\in\FF$.  
	If $(X_{i;A})_{i\in A\cap B}$ and $(X_{j;B})_{j\in A\cap B}$ are identically distributed
	whenever $AB\in E(\TT)$,
	then there exist $Y_1,\cdots,Y_k$
	with entropy
	\begin{align}\label{eq:tree_entropy}
		\HH(Y_1,\cdots,Y_k)
		=\sum_{F\in\FF}\HH((X_{i;F})_{i\in F})
		-\sum_{AB\in E(\TT)}\HH((X_{i;A})_{i\in A\cap B})
	\end{align}
	such that $(Y_i)_{i\in F}$ and $(X_{i;F})_{i\in F}$ are identically distributed over $V^{F}$ for all $F\in\FF$.
\end{theorem}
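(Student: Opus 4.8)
The plan is to build the joint law of $(Y_1,\dots,Y_k)$ explicitly by \emph{ancestral (sequential) sampling} along $\TT$, using the running-intersection condition~(2) to make the local pieces glue together consistently. Fix a root $F_0\in\FF$ and enumerate $\FF=\{F_0,F_1,\dots,F_m\}$ so that $\{F_0,\dots,F_{j-1}\}$ induces a subtree of $\TT$ for each $j$ (for instance a breadth-first order). For $j\ge1$ let $p(F_j)$ be the unique neighbour of $F_j$ among its predecessors and put $S_j:=F_j\cap p(F_j)$. The crucial consequence of condition~(2) is that $F_j\cap(F_0\cup\dots\cup F_{j-1})=S_j$: if $i$ lies in $F_j$ and in some earlier $F_\ell$, then $p(F_j)$ lies on the $F_j$--$F_\ell$ path in $\TT$, so $i\in p(F_j)$ by~(2). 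Hence, writing $N_0:=F_0$ and $N_j:=F_j\setminus S_j$ for $j\ge1$, the sets $N_0,\dots,N_m$ partition $[k]$, each $F_j$ is the disjoint union $S_j\sqcup N_j$, and $F_0\cup\dots\cup F_{j-1}=N_0\cup\dots\cup N_{j-1}$ for every $j$.

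The construction is then: sample $(Y_i)_{i\in F_0}$ from the law of $(X_{i;F_0})_{i\in F_0}$; and for $j=1,\dots,m$ in order, given the already-determined values $(Y_i)_{i\in S_j}$ (possible since $S_j\subseteq p(F_j)$), sample $(Y_i)_{i\in N_j}$ from the conditional law of $(X_{i;F_j})_{i\in N_j}$ given $(X_{i;F_j})_{i\in S_j}=(Y_i)_{i\in S_j}$, using fresh independent randomness. I would show by induction on $j$ that $(Y_i)_{i\in F_j}$ and $(X_{i;F_j})_{i\in F_j}$ are identically distributed; this in particular shows the conditional laws used in the construction are well defined, because by the inductive hypothesis applied to $p(F_j)$ the vector $(Y_i)_{i\in S_j}$ lies in the support of $(X_{i;p(F_j)})_{i\in S_j}$, which by the edge-consistency hypothesis for $F_jp(F_j)$ coincides with the support of $(X_{i;F_j})_{i\in S_j}$. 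The inductive step is immediate: $(Y_i)_{i\in S_j}\overset{d}{=}(X_{i;p(F_j)})_{i\in S_j}\overset{d}{=}(X_{i;F_j})_{i\in S_j}$, and conditionally on $(Y_i)_{i\in S_j}$ the block $(Y_i)_{i\in N_j}$ has, by construction, exactly the conditional law of $(X_{i;F_j})_{i\in N_j}$, so multiplying the marginal by the conditional reconstitutes the law of $(X_{i;F_j})_{i\in F_j}$ on $S_j\sqcup N_j=F_j$. Since every member of $\FF$ is some $F_j$, this gives the required marginal property for all $F\in\FF$.

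For the entropy, apply the chain rule along the partition $[k]=N_0\cup\dots\cup N_m$:
\[
\HH(Y_1,\dots,Y_k)=\HH\big((Y_i)_{i\in N_0}\big)+\sum_{j=1}^{m}\HH\big((Y_i)_{i\in N_j}\,\big|\,(Y_i)_{i\in N_0\cup\dots\cup N_{j-1}}\big).
\]
By construction, conditionally on $(Y_i)_{i\in S_j}$ the block $(Y_i)_{i\in N_j}$ is independent of every previously sampled coordinate, and since $S_j\subseteq F_0\cup\dots\cup F_{j-1}=N_0\cup\dots\cup N_{j-1}$ we may replace the conditioning set by $S_j$; thus the $j$-th summand equals $\HH\big((Y_i)_{i\in N_j}\mid(Y_i)_{i\in S_j}\big)=\HH\big((Y_i)_{i\in F_j}\big)-\HH\big((Y_i)_{i\in S_j}\big)$, which by the marginal property is $\HH\big((X_{i;F_j})_{i\in F_j}\big)-\HH\big((X_{i;F_j})_{i\in S_j}\big)$. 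Summing over $j$ and using that $j\mapsto\{F_j,p(F_j)\}$ is a bijection from $\{1,\dots,m\}$ onto $E(\TT)$ with $F_j\cap p(F_j)=S_j$ yields \eqref{eq:tree_entropy}. (Equivalently one could write down the junction-tree factorization of the density of $Y$ and read off both the marginals and the entropy, but the sequential description makes the positivity bookkeeping cleaner.)

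The only genuine obstacle is this positivity/consistency bookkeeping: one must be certain that $F_j\cap(F_0\cup\dots\cup F_{j-1})=S_j$ and that none of the conditioning events encountered during sampling has probability zero. Both facts come directly from the running-intersection condition~(2) together with the edge-consistency hypothesis; once they are in place, the marginal identity and the entropy formula follow from the chain rule alone.
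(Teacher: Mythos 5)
Your proposal is correct and is essentially the paper's argument unrolled: the paper peels off a leaf bag of $\TT$ and glues it back via the conditional-independent-coupling lemma (Lemma~\ref{lem:kolmogorov}), which is exactly your sequential conditional-sampling step along a root-to-leaves ordering, and the entropy computation is the same chain-rule/conditional-independence bookkeeping. The only substantive difference is that you spell out the running-intersection identity $F_j\cap(F_0\cup\dots\cup F_{j-1})=S_j$ and the positivity of the conditioning events, facts the paper uses implicitly when it asserts $\{t,\dots,\ell\}=L\cap P$ after reindexing.
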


The following version of the Kolmogorov Extension Theorem is the key ingredient in proving Theorem \ref{thm:tree_entropy}.

\begin{lemma}[Kolmogorov extension theorem]\label{lem:kolmogorov}
Let $(X_1,X_2)$ and $(X_2',X_3)$ be random vectors.
If $X_2$ and $X_2'$ are identically distributed, then there exists $(Y_1,Y_2,Y_3)$ 
such that
$Y_1$ and $Y_3$ are conditionally independent given $Y_2$
and, for $i=1,2,3$, $X_i$ and $Y_i$ are identically distributed.
\end{lemma}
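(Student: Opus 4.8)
The plan is to build the law of $(Y_1,Y_2,Y_3)$ by hand, ``gluing'' the law of $(X_1,X_2)$ to that of $(X_2',X_3)$ along their common $X_2$-marginal, and arranging the gluing so that the conditional independence $Y_1\perp Y_3\mid Y_2$ is forced by the construction rather than checked afterwards.

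In the discrete setting, which is all Theorem~\ref{thm:tree_entropy} needs, I would just write down the joint probability mass function. Put $p(a,b)=\Pr[X_1=a,X_2=b]$, $q(b,c)=\Pr[X_2'=b,X_3=c]$ and $r(b)=\Pr[X_2=b]$; by the hypothesis $\Pr[X_2'=b]=r(b)$ as well. Define
\[
  \Pr[Y_1=a,\,Y_2=b,\,Y_3=c]=\frac{p(a,b)\,q(b,c)}{r(b)}
\]
for $b$ with $r(b)>0$ and $0$ otherwise. Then three routine verifications finish it: (i) this is a probability distribution, since summing out $a$ and $c$ for fixed $b$ gives $r(b)\cdot r(b)/r(b)=r(b)$ and $\sum_b r(b)=1$; (ii) the two-dimensional marginals are right — summing out $c$ recovers $p(a,b)$, so $(Y_1,Y_2)$ and $(X_1,X_2)$ are identically distributed, and symmetrically $(Y_2,Y_3)\sim(X_2',X_3)$, which in particular gives $X_i\sim Y_i$ for each $i$; (iii) $Y_1$ and $Y_3$ are conditionally independent given $Y_2$, because on $\{Y_2=b\}$ with $r(b)>0$ the conditional joint mass factors as $\tfrac{p(a,b)}{r(b)}\cdot\tfrac{q(b,c)}{r(b)}$. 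Each is a one-line computation.

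For general random vectors the same recipe runs at the level of measures. Let $\mu_{12},\mu_{23}$ be the laws of $(X_1,X_2)$ and $(X_2',X_3)$ and let $\nu$ be their common marginal on the middle coordinate. Disintegrate, writing $\mu_{12}(\mathrm{d}x_1,\mathrm{d}x_2)=\nu(\mathrm{d}x_2)\,\kappa_1(x_2,\mathrm{d}x_1)$ and $\mu_{23}(\mathrm{d}x_2,\mathrm{d}x_3)=\nu(\mathrm{d}x_2)\,\kappa_3(x_2,\mathrm{d}x_3)$ with $\kappa_1,\kappa_3$ probability kernels, and then let $(Y_1,Y_2,Y_3)$ have law $\nu(\mathrm{d}x_2)\,\kappa_1(x_2,\mathrm{d}x_1)\,\kappa_3(x_2,\mathrm{d}x_3)$: Fubini gives the marginals and the product-of-kernels form gives the conditional independence.

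The only genuine obstacle is in this general case: the existence of the disintegrations $\kappa_1,\kappa_3$ — equivalently, of regular conditional distributions given $X_2$ — is not automatic for arbitrary measurable spaces and requires a mild standing hypothesis such as the underlying spaces being standard Borel, which is exactly where a Kolmogorov-extension-type input enters. In the discrete case relevant to this note no such issue arises and the displayed formula is entirely self-contained.
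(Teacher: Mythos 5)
Your construction is correct: the glued mass function $p(a,b)q(b,c)/r(b)$ has the right two-dimensional marginals, the stated conditional independence, and hence the right one-dimensional marginals, which is all the lemma asks. The paper itself states this lemma without proof (citing it as a version of the Kolmogorov extension theorem, i.e.\ Szegedy's ``conditional independent coupling''), and your argument is exactly the standard construction one would supply; moreover, since every random vector in the paper takes values in a finite set $V^F$, your discrete computation is fully sufficient and the measure-theoretic caveat about regular conditional distributions, while accurate, is not needed for anything in this note.
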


The conclusion of this theorem is exactly what Szegedy \cite{Sz15} refers to as `conditional independent coupling'. Hence, what we describe in this note not only generalises the results in \cite{CKLL15},
but also explains what can be obtained by using the abstract recursive approach in \cite{Sz15}.
  
\begin{proof}[Proof of Theorem \ref{thm:tree_entropy}]
	We use induction on $|\FF|$.
	Fix a leaf $L$ of $\TT$ and let $\TT'$ be the tree $\TT\setminus L$ on $\FF':=\FF\setminus\{L\}$.
	By rearranging indices, we may assume that
	$L=\{t,t+1,\cdots,k\}$ for some $t\leq k$
	and that 
	$\FF'$ satisfies $\cup_{F\in\FF'}F=[\ell]$ for some $t\leq \ell\leq k$.
	By the inductive hypothesis, there is $\Y=(Y_1,Y_2,\cdots,Y_\ell)$ such that 
	$\Y_F:=(Y_i)_{i\in F}$ and $\X_F:=(X_{i;F})_{i\in F}$ are identically distributed
	for each $F\in\FF'$ and, moreover,
	\begin{align}\label{eq:induction}
		\HH(\Y)
		=\sum_{F\in\FF'}\HH((X_{i;F})_{i\in F})
		-\sum_{AB\in E(\TT')}\HH((X_{i;A})_{i\in A\cap B}).
	\end{align}
	Using Lemma \ref{lem:kolmogorov}
	with 
	\begin{align*}
	& X_1=(Y_1,Y_2,\cdots,Y_{t-1}),~~ X_2=(Y_t,Y_{t+1},\cdots,Y_\ell),\\
	& X_2'=(X_{t;L},X_{t+1;L},\cdots,X_{\ell;L}), X_3=(X_{\ell+1;L},X_{\ell+2;L},\cdots,X_{k;L}),
	\end{align*}
	we see that there exists
	$(Z_1,Z_2,\cdots,Z_k)$ such that
	$(Z_1,Z_2,\cdots,Z_{t-1})$ and $(Z_{\ell+1},\cdots,Z_{k})$ are conditionally independent given
	$(Z_t,Z_{t+1},\cdots,Z_{\ell})$,
	$Z_i$ and $Y_i$ are identically distributed for $i=1,2,\cdots,\ell$,
	and $Z_j$ and $X_{j;L}$ are identically distributed for all $t\leq j\leq k$.
	By conditional independence, we obtain
	\begin{align*}
		\HH(Z_1,Z_2,\cdots,Z_k)
		=\HH(\Y)+\HH(\X_{L})
		-\HH(Y_t,Y_{t+1},\cdots,Y_{\ell}).
	\end{align*}
	Using \eqref{eq:induction} and the fact that $\{t,t+1,\cdots,\ell\}=L\cap P$, 
	where $P$ is the neighbour of $L$ in $\TT$,
	\eqref{eq:tree_entropy} follows.
\end{proof}

\begin{remark}\label{rem:projection}
We say that $(\FF',\TT')$ is a \emph{Markov subtree} if $\TT'$ is a subtree of $\TT$ induced on the vertex set $\FF'\subseteq\FF$.
One may check that a Markov subtree of a Markov tree is again a Markov tree on the subset $K':=\cup_{F\in\FF'}F$ of $[k]$.
Thus, replacing $(\FF,\TT)$ by $(\FF',\TT')$ in Theorem \ref{thm:tree_entropy} gives
that there exists $(Y_i')_{i\in K'}$ satisfying
\begin{align*}
\HH((Y_i')_{i\in K'})=\sum_{F\in\FF'}\HH((X_{i;F})_{i\in F})
		-\sum_{AB\in E(\TT')}\HH((X_{i;A})_{i\in A\cap B})
\end{align*}
Moreover, the inductive proof above tells us 
that the random vectors $(Y_i)_{i\in K'}$ and $(Y_i')_{i\in K'}$ are identically distributed,
that is, the marginal distribution on $K'$ is preserved while extending $(Y_i')_{i\in K'}$ to $\Y=(Y_1,Y_2,\cdots,Y_k)$.
\end{remark} 

We are now ready for our recursive definition of $k$-\emph{strong tree decompositions}. For the base case, we define a $0$-strong tree decomposition of a tree $T$ to be $(\FF,\TT)$,
where $\FF=E(T)$ and $\TT$ is a spanning tree of the line graph of $T$.
For a positive integer $k$, a $k$-strong tree decomposition of a graph $H$ is a tree decomposition $(\FF,\TT)$ of $H$ together with a family $\S=(\FF_X,\TT_X,\S_X)_{X\in\FF}$ of $(k-1)$-strong tree decompositions that satisfy the following conditions:
\begin{enumerate}
\item Each $(\FF_X,\TT_X,\S_X)$ is a $(k-1)$-strong tree decomposition of the induced subgraph $H[X]$.
\item For each $XY\in E(\TT)$, $H[X\cap Y]$ is an induced forest.
\item For each $XY\in E(\TT)$, there is an isomorphism that fixes $X\cap Y$ between the minimum sub-decompositions containing $X\cap Y$ of the $(k-1)$-strong tree decompositions $(\FF_X,\TT_X,\S_X)$ and $(\FF_Y,\TT_Y,\S_Y)$. 
\end{enumerate}

To complete this recursive definition,
we must define sub-decompositions of $k$-strong tree decompositions and isomorphisms between them.
Firstly, a \emph{retraction} of a tree decomposition $(\FF,\TT)$ of a graph $H$ is a tree decomposition $(\FF',\TT')$ obtained by successively deleting leaves of $\TT$ to obtain $\TT'$ and letting $\FF'=V(\TT')$.
In particular, $(\FF',\TT')$ is a tree decomposition of the subgraph of $H$ induced on $\cup_{X\in \FF'}X$.
A \emph{sub-decomposition} of a $0$-strong tree decomposition $(\FF,\TT)$
is a retraction of $(\FF,\TT)$.
Thus, the underlying graph of a sub-decomposition of a $0$-strong tree decomposition $(\FF,\TT)$
is a subtree of the underlying tree of $(\FF,\TT)$.

To generalise this definition for positive $k$, we need the following proposition, whose proof we postpone to an appendix. 

\begin{proposition}\label{prop:subtree}
Let $(\FF,\TT)$ be a tree decomposition of $H$ and let $U\subseteq V(H)$. Given $u \in V(H)$, denote by $\FF(u)$ the subfamily $\{X\in\FF:u\in X\}$ of $\mathcal{F}$.
If $\cap_{u\in U}\FF(u)=\emptyset$, then
there exists a minimum subfamily $\FF'\subseteq\FF$ 
such that $\cup_{F\in\FF'}F$ contains $U$ and $\FF'$ induces a subtree of $\TT$.
\end{proposition}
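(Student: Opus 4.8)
The plan is to recast the statement in terms of subtrees of $\TT$ and then lean on two standard facts about trees: that a connected subgraph of $\TT$ crossing both sides of a deleted edge contains that edge, and the Helly property of subtrees of a tree. Since $(\FF,\TT)$ is a tree decomposition, for every $u\in V(H)$ the set $\FF(u)$ induces a subtree (connected subgraph) of $\TT$. A subfamily $\FF'\subseteq\FF$ induces a subtree of $\TT$ precisely when it is a connected vertex set, and $\bigcup_{F\in\FF'}F\supseteq U$ precisely when $\FF'\cap\FF(u)\neq\emptyset$ for every $u\in U$; call such a subfamily \emph{$U$-spanning}. Note $\FF$ itself is $U$-spanning, so $U$-spanning subfamilies exist, and a ``minimum'' one means one contained in every $U$-spanning subfamily. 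The whole proof reduces to: (i) any two $U$-spanning subfamilies have a common element, and (ii) the intersection of two $U$-spanning subfamilies is again $U$-spanning. Granting these, enumerate the (finitely many, as $\FF$ is finite) $U$-spanning subfamilies as $\FF_1,\dots,\FF_m$ and set $\FF'=\FF_1\cap\cdots\cap\FF_m$; an induction using (i) and (ii) shows each partial intersection is $U$-spanning, so $\FF'$ is $U$-spanning and, by construction, contained in every $U$-spanning subfamily, hence is the desired minimum.

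The crux is (i), and it is the only place the hypothesis $\bigcap_{u\in U}\FF(u)=\emptyset$ is used. Suppose $\FF_1,\FF_2$ are $U$-spanning but $\FF_1\cap\FF_2=\emptyset$. As disjoint subtrees of the tree $\TT$, they are separated by a single edge: on the $\TT$-path between any $X_1\in\FF_1$ and $X_2\in\FF_2$ let $X$ be the last vertex lying in $\FF_1$ and $Y$ its successor; connectedness of $\FF_1$ and $\FF_2$ together with $\FF_1\cap\FF_2=\emptyset$ forces all of $\FF_1$ to lie in one component of $\TT-XY$ and all of $\FF_2$ in the other. Now fix $u\in U$. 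Since $\FF_1$ and $\FF_2$ are $U$-spanning, $\FF(u)$ meets both components of $\TT-XY$; being connected it must then use the edge $XY$, so $X\in\FF(u)$. As $u$ was arbitrary, $X\in\bigcap_{u\in U}\FF(u)$, contradicting the hypothesis.

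For (ii), let $\FF_1,\FF_2$ be $U$-spanning. By (i) they intersect, and the intersection of two nonempty subtrees of a tree is again a nonempty subtree, so $\FF_1\cap\FF_2$ induces a subtree of $\TT$. For each $u\in U$, the subtrees $\FF_1,\FF_2,\FF(u)$ pairwise intersect ($\FF_1\cap\FF_2\neq\emptyset$ by (i), and $\FF_i\cap\FF(u)\neq\emptyset$ since $\FF_i$ is $U$-spanning), so by the Helly property of subtrees of a tree they share a vertex; in particular $(\FF_1\cap\FF_2)\cap\FF(u)\neq\emptyset$. Hence $\FF_1\cap\FF_2$ is $U$-spanning. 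The only non-formal ingredients are the two tree facts invoked above, and I do not expect either to cause difficulty; the point to get exactly right is that the assumption $\bigcap_{u\in U}\FF(u)=\emptyset$ is precisely what excludes the failure case in (i) (and indeed, without it a single bag can serve as a ``minimum'' in more than one way).
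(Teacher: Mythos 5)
Your proof is correct, but it takes a genuinely different route from the paper. The paper argues by induction on $|U|$, explicitly building the minimum good subfamily: in the base case it takes the shortest path in $\TT$ between the disjoint subtrees $\TT[\FF(u_1)]$ and $\TT[\FF(u_2)]$, and in the inductive step it either extends the minimum family for $U\setminus\{u_t\}$ by a shortest path to $\FF(u_t)$, or (when $\cap_{i<t}\FF(u_i)\neq\emptyset$) takes the path from that intersection to $\FF(u_t)$, invoking the Helly property of subtrees once to prove minimality. You instead characterize the good families abstractly as the connected subfamilies meeting every $\FF(u)$, prove that any two of them intersect (this is where the hypothesis $\cap_{u\in U}\FF(u)=\emptyset$ enters, via the separating-edge argument: a common separating edge $XY$ would force $X$ into every $\FF(u)$) and that the class is closed under pairwise intersection (intersection of subtrees plus Helly applied to $\FF_1,\FF_2,\FF(u)$), and then take the intersection of all good families as the minimum. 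Your closure argument is cleaner in that it avoids the case analysis and the induction on $|U|$, and it makes transparent exactly where the hypothesis is needed (your parenthetical remark about a single bag is the right observation); the paper's construction, on the other hand, yields an explicit description of the minimum family as a path or a path-augmented subtree, which is what the examples and the definition of minimum sub-decompositions in the main text lean on. Both arguments rest on the same two elementary facts about subtrees of a tree, so there is no gap either way.
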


Let $(\FF,\TT,\S)$ be a $k$-strong tree decomposition of a graph $H$
and let $U\subseteq V(H)$.
Suppose first that $\cap_{u\in U}\FF(u)$ is empty.
Then Proposition \ref{prop:subtree} implies that there exists a minimum $\FF'$
such that $\cup_{F\in\FF'}F$ contains $U$ and $\FF'$ induces a subtree in $\TT$.
The minimum sub-decomposition containing $U$ is then defined to be 
$(\FF',\TT',\S')$, where $(\FF',\TT')$ is a retraction of $(\FF,\TT)$.
Otherwise, if $\cap_{u\in U}\FF(u)$ is non-empty,
every vertex subset $X\in\cap_{u\in U}\FF(u)$ contains $U$.
We then define the minimum sub-decomposition of $(\FF,\TT,\S)$ containing $U$ to be
the minimum sub-decomposition of the $(k-1)$-strong tree decomposition 
$(\FF_X,\TT_X,\S_X)$ containing $U$ for some $X\in\cap_{u\in U}\FF(u)$.
In this case, $\FF'$ may not be unique, but the isomorphism condition
between $(k-1)$-strong tree decompositions implies uniqueness up to isomorphism.

To make this statement fully rigorous, we define an \emph{isomorphism} between two $k$-strong tree decompositions $(\FF_1,\TT_1,\S_1)$ of $H_1$ and
$(\FF_2,\TT_2,\S_2)$ of $H_2$ to be an isomorphism between the underlying graphs $H_1$ and $H_2$ that induces an isomorphism between each corresponding pair of $(k-1)$-strong tree decompositions in $\S_1$ and $\S_2$.
In particular, an isomorphism between two $0$-strong tree decompositions is simply a graph isomorphism between the underlying trees.
In summary, we see that sub-decompositions and isomorphisms of $k$-strong tree decompositions are both recursively defined in terms of the same notions for $(k-1)$-strong tree decompositions.

We say that a graph is $k$-strongly tree-decomposable if it is the underlying graph of a $k$-strong tree decomposition.
In particular, a graph is $0$-strongly tree-decomposable if and only if it is a tree.
Figure \ref{fig:decomp} gives an example of a $2$-strong tree decomposition. 
\begin{figure}
    \includegraphics[width=0.5\textwidth]{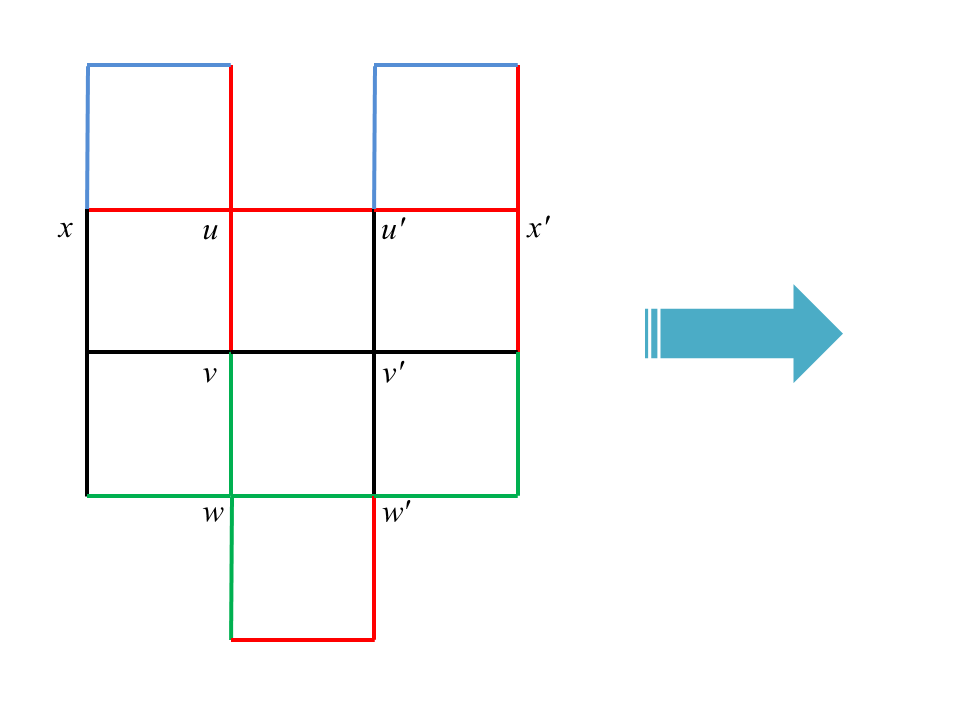}
    \includegraphics[width=0.5\textwidth]{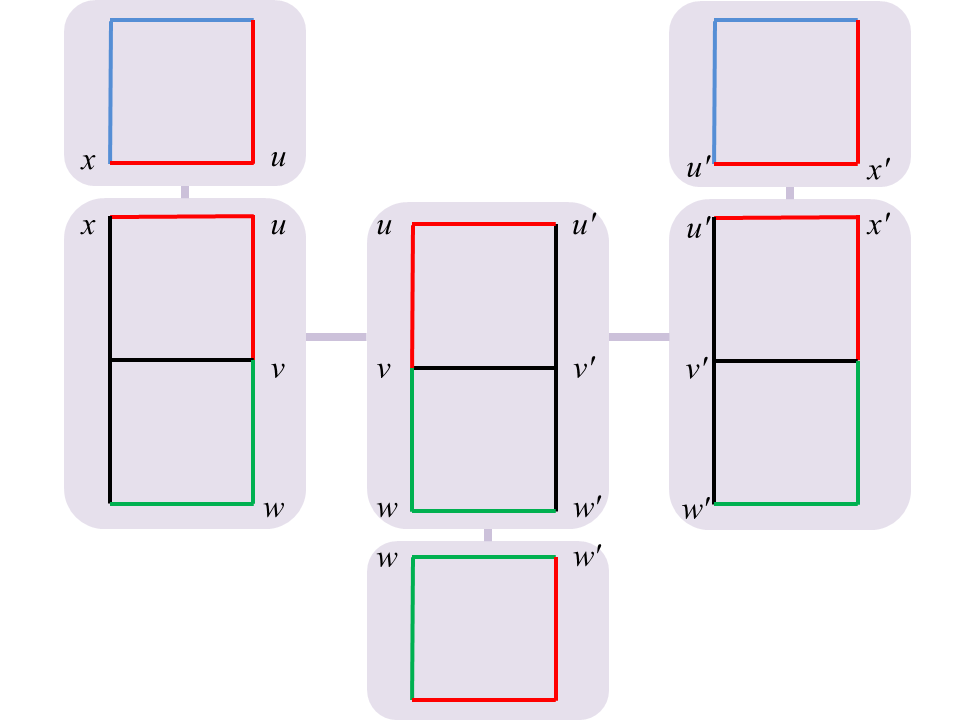}
    \caption{ An example of a $2$-strong tree decomposition.}\label{fig:decomp}
\end{figure}
The tree decomposition $(\FF,\TT)$ of $H$
is represented by the grey boxes,
in each of which the coloured trees show the 1-strong tree decomposition
of the corresponding induced subgraph.
For example, the underlying graph of a minimal sub-decomposition containing $u,v$, and $w$ is the subgrid with two squares induced by the leftmost grey box containing them or the isomorphic one next to it.
On the other hand, in the same box, the minimum sub-decomposition containing $x$,$u$, and $v$ is the trivial tree decomposition of the red tree induced on the three vertices with $\FF'=\{\{x,u\},\{u,v\}\}$ and the single edge tree $\TT'$.
 
The main result of this note is:

\begin{theorem}\label{thm:main_general}
If $H$ is $k$-strongly tree decomposable for any $k\geq 0$, then $H$ has Sidorenko's property. 
\end{theorem}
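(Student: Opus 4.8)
The plan is to prove this by induction on $k$, using the entropy method as developed in \cite{LSz12, KLL14, CKLL15}, with Theorem~\ref{thm:tree_entropy} serving as the gluing device at each level of the recursion. Recall that to prove $H$ has Sidorenko's property, it suffices to build a random homomorphism $\bz = (Z_v)_{v \in V(H)}$ from $H$ into a weighted graph $G$ (identified with a graphon-type kernel) such that the edge-marginals $(Z_u, Z_v)$ for $uv \in E(H)$ each dominate the "correct" distribution (i.e., are close to the distribution of a random edge of $G$ in the appropriate sense), and then to bound $\HH(\bz)$ from below using \eqref{eq:tree_entropy}; the standard entropy inequality $\log t(H,G) \geq \HH(\bz) - \sum_{uv \in E(H)} \HH(Z_u \mid Z_v \text{-side stuff}) + \ldots$ then yields $t(H,G) \geq p^{e(H)}$ where $p$ is the edge density.

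\textbf{Base case and inductive step.} For $k = 0$, $H$ is a tree, and Sidorenko's property for trees is classical (each edge contributes a factor of $p$ and the tree structure makes the entropy bookkeeping exact); alternatively this is the base case already handled in \cite{CKLL15}. For the inductive step, suppose every $(k-1)$-strongly tree-decomposable graph has Sidorenko's property, and let $(\FF, \TT, \S)$ be a $k$-strong tree decomposition of $H$. For each bag $X \in \FF$, the induced subgraph $H[X]$ is $(k-1)$-strongly tree-decomposable via $(\FF_X, \TT_X, \S_X)$, so by the inductive hypothesis there is a good random homomorphism $\bz^X = (Z^X_v)_{v \in X}$ from $H[X]$ to $G$ achieving the requisite entropy lower bound. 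The crucial point is that condition~(3) in the definition of $k$-strong tree decomposition — the isomorphism fixing $X \cap Y$ between the minimum sub-decompositions containing $X \cap Y$ — together with condition~(2) ($H[X \cap Y]$ an induced forest), lets us arrange (by the inductive construction) that for $XY \in E(\TT)$ the restricted random vectors $(Z^X_v)_{v \in X \cap Y}$ and $(Z^Y_v)_{v \in X \cap Y}$ are \emph{identically distributed}: they are both the canonical random homomorphism of the forest $H[X \cap Y]$ obtained by the same recursive recipe applied to isomorphic sub-decompositions. (Here Remark~\ref{rem:projection} is what guarantees that taking the sub-decomposition and then building the homomorphism commutes with the marginal, so the common distribution on $X \cap Y$ is well-defined and matches on both sides.)

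\textbf{Gluing via the Markov tree lemma.} Now set $\FF$ itself (the bags of the top-level tree decomposition) as the family of subsets of $[|V(H)|]$ and $\TT$ as the tree; this is a Markov tree because a tree decomposition satisfies exactly the property that $X \cap Y \subseteq Z$ whenever $Z$ is on the $X$–$Y$ path in $\TT$. Apply Theorem~\ref{thm:tree_entropy} with $X_{i;F} = Z^F_i$: since the edge-marginals agree, we obtain a single random vector $\bz = (Z_v)_{v \in V(H)}$ whose restriction to each bag $X$ is distributed as $\bz^X$, with
\begin{align*}
\HH(\bz) = \sum_{X \in \FF} \HH(\bz^X) - \sum_{XY \in E(\TT)} \HH\big((Z^X_v)_{v \in X \cap Y}\big).
\end{align*}
Because $\bz$ restricted to each bag is a good homomorphism of $H[X]$, and every edge of $H$ lies in some bag, $\bz$ is a random homomorphism of $H$ into $G$ with correct edge-marginals. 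Plugging the above entropy identity into the master entropy inequality, expanding each $\HH(\bz^X)$ using the inductive bound, and checking that the correction terms $\HH((Z^X_v)_{v \in X \cap Y})$ for the forest $H[X \cap Y]$ exactly cancel the over-counting of shared vertices and edges (this is where "induced forest" matters — forests have the clean entropy-additivity needed for exact cancellation, just as in the $k=1$ case in \cite{CKLL15}), we arrive at $t(H, G) \geq p^{e(H)}$, i.e., $H$ has Sidorenko's property.

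\textbf{Main obstacle.} The delicate part is \emph{not} the entropy arithmetic but rather establishing that the inductively-built homomorphisms $\bz^X$ and $\bz^Y$ genuinely restrict to the \emph{same} distribution on $X \cap Y$ — i.e., that the construction in the inductive hypothesis can be made sufficiently canonical/functorial that it respects the isomorphism in condition~(3). One must set up the induction to carry, as part of its statement, that the random homomorphism associated to a $(k-1)$-strong tree decomposition depends only on the isomorphism type of that decomposition and is compatible with passing to sub-decompositions (via Remark~\ref{rem:projection}). Once the inductive statement is correctly strengthened in this way, the isomorphism fixing $X \cap Y$ forces agreement of the marginals, Theorem~\ref{thm:tree_entropy} applies, and the rest follows the template of \cite{CKLL15}.
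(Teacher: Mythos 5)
Your overall architecture coincides with the paper's: build the glued distribution by recursion on $k$, use condition (3) together with a strengthened inductive statement (compatibility of the construction with sub-decompositions and with isomorphisms, which the paper formalizes as Lemma~\ref{lem:commutes}, using Remark~\ref{rem:projection}) to get agreement of the two marginals on $X\cap Y$, and then apply Theorem~\ref{thm:tree_entropy}. The genuine gap is in the entropy accounting, at the step where you assert that the correction terms $\HH\bigl((Z^X_v)_{v\in X\cap Y}\bigr)$ ``exactly cancel'' the over-counting because $H[X\cap Y]$ is a forest, ``just as in the $k=1$ case in \cite{CKLL15}''. In \cite{CKLL15} the intersections were \emph{independent sets}, so each correction term is trivially at most $|X\cap Y|\log|V(G)|$, which is precisely what the double-counted vertices require. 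Once $H[X\cap Y]$ may contain edges, you need each correction term to be at most $|X\cap Y|\log|V(G)|+e_{X\cap Y}\log\bigl(2|E(G)|/|V(G)|^2\bigr)+O_H(1)$, i.e.\ each forest edge may cost only about $\log\bigl(p\,|V(G)|\bigr)$ bits. For a general host graph $G$ this is simply false: the marginal entropy is bounded only by $\log|\Hom(H[X\cap Y],G)|$, and a forest with $e$ edges can have on the order of $|V(G)|\,\Delta(G)^{e}$ homomorphisms (e.g.\ a path of length two mapped into a graph close to a star has about $|V(G)|^2$ homomorphisms, while the target bound is of order $|V(G)|^3p^2\approx|V(G)|$), and the associated marginal really can have entropy this large. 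So without further input the cancellation fails and neither \eqref{eq:rightbound} nor $t(H,G)\ge p^{e(H)}$ follows.

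The paper closes exactly this gap with Lemma~\ref{lem:sido_maxdegree} (Fox's reduction): it suffices to prove the homomorphism-count lower bound for graphs $G$ with maximum degree at most $4|E(G)|/|V(G)|$, and under that assumption $|\Hom(H[X\cap Y],G)|\le|V(G)|^{|X\cap Y|}\bigl(4|E(G)|/|V(G)|^2\bigr)^{e_{X\cap Y}}$, so each forest edge costs $\log\bigl(2|E(G)|/|V(G)|^2\bigr)$ plus an absolute constant and the bookkeeping goes through with all losses absorbed into $c_H$ (the cancellation is up to a constant depending on $H$, not exact). You must add this reduction, or an equivalent device such as Szegedy's argument in \cite{Sz15}, before your cancellation step. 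The remaining parts of your plan---the base case, the agreement of marginals on $X\cap Y$ via the isomorphism of minimum sub-decompositions, and gluing by Theorem~\ref{thm:tree_entropy}---follow the same route as the paper, though you leave the strengthened induction (the paper's Lemma~\ref{lem:commutes}) as an outline rather than carrying it out.
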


This generalises Theorem 1.2 in \cite{CKLL15},
as every strongly tree decomposable graph is $1$-strongly tree decomposable.
In fact, a $1$-strong tree decomposition is a slightly more general 
concept than a strong tree decomposition:
in the definition of strong tree decomposition in \cite{CKLL15},
$H[X\cap Y]$ was an independent set whenever $X$ and $Y$ were adjacent vertices of $\FF$, whereas here we allow $H[X\cap Y]$ to be a forest.
This technical improvement was first observed by Szegedy \cite[Lemma 3.1]{Sz15}.
We show that it can be recovered by using the following elementary lemma due to Fox (see~\cite{KLL14}).

\begin{lemma} \label{lem:sido_maxdegree}
A bipartite graph $H$ has Sidorenko's property
if and only if there exists a constant $c_H>0$ depending only on $H$ such that,
for all graphs $G$ with maximum degree at most $\frac{4|E(G)|}{|V(G)|}$, 
\[|\Hom(H,G)|\ge c_H |V(G)|^{|V(H)|}\left(\frac{2|E(G)|}{|V(G)|^{2}}\right)^{|E(H)|}.\]
\end{lemma}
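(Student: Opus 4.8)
\textbf{Proof plan for Lemma \ref{lem:sido_maxdegree}.}

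The plan is to prove the equivalence directly, the forward direction being trivial and the reverse direction being the substance. For the forward implication: if $H$ has Sidorenko's property, then $|\Hom(H,G)| \ge |V(G)|^{|V(H)|}(2|E(G)|/|V(G)|^2)^{|E(H)|}$ holds for \emph{all} graphs $G$, so in particular for those of bounded maximum degree, and we may take $c_H = 1$. So the real work is the converse: assuming the bounded-degree homomorphism bound (with some constant $c_H$), we must recover the inequality $|\Hom(H,G)| \ge c_H' |V(G)|^{|V(H)|} p^{|E(H)|}$ for every graph $G$, where $p = 2|E(G)|/|V(G)|^2$ is the edge density.

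The key device is a \emph{cleaning} or \emph{regularisation} step: given an arbitrary $G$ on $n$ vertices with $m$ edges, I would pass to a subgraph $G'$ whose maximum degree is controlled relative to its own average degree, while not losing too many edges. Concretely, repeatedly delete any vertex whose degree exceeds twice the current average degree; a standard counting argument shows that this process deletes fewer than half the edges (each deleted vertex removes more than the average number of edges, so the number of deleted vertices is bounded, and one tracks the edge count carefully), leaving a nonempty subgraph $G'$ on $n' \le n$ vertices with $m' \ge m/2$ edges and maximum degree at most $4m'/n' \le $ (something comparable), which is exactly the hypothesis of the displayed bound. Applying the assumed inequality to $G'$ yields
\[
|\Hom(H,G)| \ge |\Hom(H,G')| \ge c_H (n')^{|V(H)|}\left(\frac{2m'}{(n')^2}\right)^{|E(H)|}.
\]
It then remains to compare $(n')^{|V(H)|}(2m'/(n')^2)^{|E(H)|}$ with $n^{|V(H)|}(2m/n^2)^{|E(H)|}$. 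Writing $v = |V(H)|$, $e = |E(H)|$, the target quantity scales as $n^{v-2e} m^{e}$, and since $H$ is bipartite and connected (or, in the disconnected case, one argues componentwise, noting every component of a bipartite Sidorenko graph has more edges than $\dots$) we have $v - 2e \le 0$; together with $n' \le n$ and $m' \ge m/2$ this gives the comparison up to a constant factor $2^{-e}$ absorbed into $c_H'$. The inequality $v \le 2e$ is automatic for any graph with no isolated vertices, which one may assume.

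The step I expect to be the main obstacle is making the cleaning argument quantitatively match the constant $4$ in the statement: one must choose the deletion threshold so that the surviving graph genuinely satisfies $\Delta(G') \le 4|E(G')|/|V(G')|$ and not merely something of that order, and simultaneously guarantee $|E(G')| \ge |E(G)|/2$ so the density is not diluted. The clean way to do this is to delete, one at a time, any vertex of degree exceeding $4m'/n'$ in the \emph{current} graph; if at some stage no such vertex exists we stop with the desired $G'$, and if the process ran until the graph were empty one derives a contradiction by summing the degrees of deleted vertices and comparing with $2m$. A secondary subtlety is the reduction to graphs without isolated vertices (harmless, since isolated vertices affect neither $|\Hom(H,G)|$ for $H$ without isolated vertices nor, up to the constant, the right-hand side) and the handling of disconnected or trivial $H$, which are edge cases disposed of separately.
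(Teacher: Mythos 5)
The crux of your argument --- the ``cleaning'' step --- is false, and not because of the particular threshold: no passage to a subgraph can work. It is not true that every graph $G$ contains a subgraph $G'$ with $\Delta(G')\le 4|E(G')|/|V(G')|$ and $|E(G')|\ge |E(G)|/2$. Take $G=K_{s,t}$ with $t$ much larger than $s$ (already the star $K_{1,n-1}$ shows the problem): your deletion process removes the small side entirely and ends with \emph{no} edges. Worse, for \emph{any} subgraph $G'\subseteq K_{s,t}$ with at least one edge, if $s'$ and $t'$ denote the numbers of non-isolated vertices of $G'$ on the two sides, then some small-side vertex has degree at least $|E(G')|/s'$, so the condition $\Delta(G')\le 4|E(G')|/|V(G')|\le 4|E(G')|/(s'+t')$ forces $t'\le 3s'$ and hence $|E(G')|\le 3s^2$, a vanishing fraction of $|E(G)|=st$. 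So the claim ``the process deletes fewer than half the edges'' (you are transplanting the standard argument for finding a subgraph of large \emph{minimum} degree, where it is correct, to the maximum-degree setting, where it is not) cannot be repaired by tuning constants, and with it the whole reverse implication collapses, since your density comparison relied on $|E(G')|\ge |E(G)|/2$. Note that the paper does not reprove this lemma --- it quotes it from \cite{KLL14}, where it is attributed to Fox --- and whatever proof one gives must leave the class of subgraphs of $G$: the known argument manufactures an auxiliary almost-regular graph from $G$ and exploits multiplicativity/monotonicity properties of both sides of the inequality, rather than cleaning $G$ itself.

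A secondary, more routine gap: even if a regularised $G'$ with comparable density existed, your reverse direction would only deliver $|\Hom(H,G)|\ge c'_H|V(G)|^{|V(H)|}\bigl(2|E(G)|/|V(G)|^2\bigr)^{|E(H)|}$ for all $G$, with a constant $c'_H<1$, whereas ``Sidorenko's property'' is the constant-free inequality. Upgrading the former to the latter requires the tensor-power trick ($\Hom(H,G^{\otimes k})=|\Hom(H,G)|^k$, while $|V(G^{\otimes k})|=|V(G)|^k$ and the edge density is also multiplicative, so taking $k$-th roots kills the constant); this step is standard but it is part of what the ``if'' direction asserts, and your proposal never mentions it. The forward direction and the observation $|V(H)|\le 2|E(H)|$ for $H$ without isolated vertices are fine, but they do not touch the main obstacle above.
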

Thus, in order to prove Sidorenko's property, it suffices to consider graphs $G$ whose maximum degree is close to the average degree.

\medskip

Let $(\FF,\TT)$ be a Markov tree and let $(X_{i;F})_{i\in F}$ be random vectors as in Theorem~\ref{thm:tree_entropy}.
We say that the distribution of $(Y_1,\cdots,Y_k)$ given by Theorem~\ref{thm:tree_entropy} is \emph{associated} with $(\FF,\TT)$ and $(X_{i;F})_{i\in F}$.
Let $(\FF,\TT,\S)$ be a $k$-strong tree decomposition $(\FF,\TT,\S)$ of $H$.
Then the distribution on $\Hom(H,G)$ that is \emph{$k$-associated} with $(\FF,\TT,\S)$ is defined recursively as follows:
the $0$-associated distribution is the one constructed by the $H$-BRW in~\cite{CKLL15};
if $k>0$, then the $k$-associated distribution 
is the one associated with the Markov tree $(\FF,\TT)$ and $(\ww_{i;X})_{i\in X}$, where $(\ww_{i;X})_{i\in X}$ is a random homomorphism in $\Hom(H[X],G)$ 
whose distribution is $(k-1)$-associated with $(\FF_X,\TT_X,\S_X)$.
That is, we extend the `local' distributions on $\Hom(H[X],G)$ given by lower complexity decompositions to a `global' one on $\Hom(H,G)$ using Theorem~\ref{thm:tree_entropy}.

To complete this recursive definition, we must prove that the distributions on $\Hom(H[X],G)$ and $\Hom(H[Y],G)$, $XY\in E(\TT)$, that are $(k-1)$-associated
with $(\FF_X,\TT_X,\S_X)$ and $(\FF_Y,\TT_Y,\S_Y)$, respectively, 
agree on the marginals on the intersection $X\cap Y$. Knowing this will enable us to apply Theorem~\ref{thm:tree_entropy}.
To facilitate our argument, we need some further definitions.
Let $p:\Hom(H,G)\rightarrow [0,1]$ be a weight function on $\Hom(H,G)$. For a subgraph $H'$ of $H$,
the \emph{$H'$-projection} of $p$ is the natural function $q:\Hom(H',G)\rightarrow[0,1]$ defined by $q(h')=\sum_{h\in \G(h')}p(h)$,
where $\G(h')$ is the set of all homomorphisms from $H$ to $G$ that extend $h'$.
This projection is transitive, i.e., for a subgraph $H''$ of $H'$,
one may easily check that if $q$ is the $H'$-projection of $p$ and $r$ is the $H''$-projection of $q$,
then $r$ is the $H''$-projection of $p$.
Let $\phi:V(H_2)\rightarrow V(H_1)$ be an isomorphism between the two graphs $H_1$ and $H_2$.
Then a weight function $p_1:\Hom(H_1,G)\rightarrow[0,1]$ naturally defines another weight function $p_2=p_1\circ\phi:\Hom(H_2,G)\rightarrow[0,1]$.
If two distributions $p_1$ and $p_2$ satisfy $p_2=p_1\circ\phi$, then
we say that the two are \emph{preserved} by the isomorphism $\phi$.

\begin{lemma}\label{lem:commutes}
For positive integers $\ell$ and $k$ with $\ell\leq k$,
let $(\FF,\TT,\S)$ and $(\FF',\TT',\S')$ be a $k$-strong tree decomposition of $H$ and an $\ell$-strong tree decomposition of $H'$, respectively. Then the following hold:

\vspace{1mm}
\noindent
(i) if $H'$ is a subgraph of $H$ and $(\FF',\TT',\S')$ is a sub-decomposition of $(\FF,\TT,\S)$, then the $H'$-projection of the $k$-associated distribution on $\Hom(H,G)$ is identical to the $\ell$-associated distribution on $\Hom(H',G)$;

\vspace{1mm}
\noindent
(ii) if $(\FF',\TT',\S')$ and $(\FF,\TT,\S)$ are isomorphic, then the $k$-associated distributions of the two are preserved by the isomorphism.
\end{lemma}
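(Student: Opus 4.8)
The plan is to prove both statements simultaneously by induction on $k$, with a secondary induction on the number of ``reduction steps'' used to pass from $(\FF,\TT,\S)$ to the sub-decomposition in part (i). The base case $k=\ell=0$ is handled by the properties of the $H$-BRW construction from \cite{CKLL15}: for (i), a retraction of a $0$-strong tree decomposition simply forgets some leaves of $\TT$, and the $H$-BRW is built precisely so that its marginal on any subtree agrees with the $H'$-BRW on that subtree; for (ii), a $0$-isomorphism is a graph isomorphism of trees, and the $H$-BRW is defined canonically in terms of the tree structure, so it commutes with isomorphisms. I would state these two facts about the $H$-BRW explicitly as the anchor of the induction.

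For the inductive step, assume both (i) and (ii) hold for all decompositions of complexity $<k$. I would first dispatch part (ii): an isomorphism $\phi$ between $k$-strong tree decompositions by definition induces isomorphisms between each corresponding pair $(\FF_X,\TT_X,\S_X)$ and $(\FF_{\phi(X)},\TT_{\phi(X)},\S_{\phi(X)})$ of $(k-1)$-strong tree decompositions; by the inductive (ii) the local $(k-1)$-associated distributions on $\Hom(H[X],G)$ are preserved by these, and since the $k$-associated distribution is obtained from the local ones purely through the Markov-tree gluing of Theorem~\ref{thm:tree_entropy} — a construction that depends only on the abstract tree $\TT$ and the (now matching) local distributions — the resulting global distribution on $\Hom(H,G)$ is preserved by $\phi$. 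The only thing to check carefully here is that the distribution ``associated with $(\FF,\TT)$'' supplied by Theorem~\ref{thm:tree_entropy} is genuinely canonical, i.e.\ determined by the data and not by arbitrary choices in the proof; this follows from the maximum-entropy characterisation together with the explicit inductive construction (and the ordering of leaves does not matter because the marginals on all the $F\in\FF$ are prescribed).

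For part (i), I would split into the two cases in the definition of sub-decomposition. If $\cap_{u\in U}\FF(u)=\emptyset$, then $(\FF',\TT')$ is a retraction of $(\FF,\TT)$ and $\S'$ is the restriction of $\S$; here I peel off one leaf $L$ of $\TT$ at a time and show the marginal is preserved at each step. This is exactly Remark~\ref{rem:projection}: in the inductive proof of Theorem~\ref{thm:tree_entropy}, extending from a Markov subtree on $K'$ to the full $[k]$ leaves the marginal on $K'$ unchanged. I would combine this with transitivity of $H'$-projection to conclude. If instead $\cap_{u\in U}\FF(u)\ni X$ is nonempty, then by definition the sub-decomposition containing $U$ is taken inside the $(k-1)$-strong tree decomposition $(\FF_X,\TT_X,\S_X)$; here I first use the ``$\cap=\emptyset$'' case just proved to project the $k$-associated distribution on $\Hom(H,G)$ down to the $(k-1)$-associated distribution on $\Hom(H[X],G)$, and then apply the inductive hypothesis (i) at level $k-1$ inside $H[X]$ to project further down to $\Hom(H',G)$; transitivity of projection ties the two together. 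The fact that $X$ is not unique is harmless because the inductive (ii) guarantees the answer is independent of the choice of $X$ up to the fixed isomorphism.

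The main obstacle, and the part that will require the most care, is verifying that the marginal on the intersection $X\cap Y$ is consistent for adjacent $X,Y\in\FF$ — this is the well-definedness claim flagged in the paragraph preceding the lemma, and it is really the heart of the argument, since it is the hypothesis needed to even invoke Theorem~\ref{thm:tree_entropy} at level $k$. Here one uses condition (2) in the definition of $k$-strong tree decomposition ($H[X\cap Y]$ is an induced forest, hence $0$-strongly tree decomposable) together with condition (3) (an isomorphism fixing $X\cap Y$ between the minimum sub-decompositions containing $X\cap Y$ inside $(\FF_X,\TT_X,\S_X)$ and $(\FF_Y,\TT_Y,\S_Y)$). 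Applying the already-established part (i) to each side gives that the $\Hom(H[X\cap Y],G)$-projections of the two local $(k-1)$-associated distributions equal the respective $(k-1)$-associated distributions of these minimum sub-decompositions, and part (ii) applied to the fixing isomorphism forces those two to coincide. Thus the marginals on $X\cap Y$ agree, Theorem~\ref{thm:tree_entropy} applies, and the $k$-associated distribution is well-defined; one then checks that its marginal on each $\Hom(H[X],G)$ is the local distribution (immediate from Theorem~\ref{thm:tree_entropy}), which is what feeds back into the $\cap\neq\emptyset$ case above. Organising this as a single simultaneous induction on $k$ — where ``(i) and (ii) at level $k-1$'' is used both to establish well-definedness at level $k$ and to prove (i), (ii) at level $k$ — is the one structural subtlety to get right.
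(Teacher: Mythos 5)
Your proposal is correct and follows essentially the same route as the paper: a simultaneous induction on $k$ in which the level-$(k-1)$ statements are used both to make the $k$-associated distribution well-defined (via conditions (2) and (3) of the definition together with (i) and (ii) applied to the minimum sub-decompositions containing $X\cap Y$) and to prove (i) and (ii) at level $k$, with the retraction case of (i) handled by Remark~\ref{rem:projection} and the other case by projecting first to $\Hom(H[X],G)$ and applying transitivity of projections. The paper phrases the case split in (i) as $\ell=k$ versus $\ell<k$ rather than via $\cap_{u\in U}\FF(u)$, but these coincide, and your extra remarks on the canonicity of the distribution from Theorem~\ref{thm:tree_entropy} only make explicit what the paper leaves implicit.
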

\begin{proof}
We use induction on $k$.
Proposition 2.2 and Equation (2) in \cite{CKLL15} imply (i) and (ii), respectively, for the base case $k=0$.
The induction hypothesis, that (i) and (ii) hold for any $(k-1)$-strong tree decomposition $(\FF,\TT,\S)$ of a graph $H$ and any $\ell$-strong tree decomposition $(\FF',\TT',\S')$ of a graph $H'$
with $\ell\leq k-1$,
implies that the $k$-associated distribution of a $k$-strong tree decomposition is well-defined.
Suppose now that $H'$ is a subgraph of $H$. 
We claim that the $H'$-projection of the $k$-associated distribution $p$ on $\Hom(H, G)$ is identical to the $\ell$-associated distribution $q$ on $\Hom(H',G)$.
Suppose $\ell<k$, i.e., there exists $X\in\FF$ such that $(\FF',\TT',\S')$ is a sub-decomposition of $(\FF_X,\TT_X,\S_X)$.
In particular, $H'$ is a subgraph of $H[X]$.
Then, by the induction hypothesis, $q$ is identical
to the $H'$-projection of the $(k-1)$-associated distribution $p_X$ of $(\FF_X,\TT_X,\S_X)$.
Since $p_X$ is the $H[X]$-projection of $p$, 
transitivity implies that $q$ is the $H'$-projection of $p$.
Suppose now that $\ell=k$. Then $(\FF',\TT')$ is a retraction of $(\FF,\TT)$.
Hence, by Remark~\ref{rem:projection}, $q$ is the $H'$-projection of $p$.
It remains to prove (ii). Let $\phi$ be an isomorphism from $(\FF,\TT,\S)$ to $(\FF',\TT',\S')$.
By induction, $\phi$ gives an isomorphism between
the $(k-1)$-strong tree decompositions $(\FF_X,\TT_X,\S_X)$ and $(\FF_Y,\TT_Y,\S_Y)$ for every $X\in \FF$ and $Y=\phi(X)\in\FF'$.
Thus, the two $k$-associated distributions of $(\FF,\TT,\S)$ and $(\FF',\TT',\S')$ are
associated with the same $(k-1)$-associated distributions and have isomorphic Markov trees.
Therefore, they are constructed in the same way by using Theorem~\ref{thm:tree_entropy} and hence preserved by $\phi$.
\end{proof}

Suppose now that we have a $k$-strong tree decomposition $(\FF,\TT,\S)$ of a graph $H$ and $XY\in E(\TT)$. By definition, we know that there is an isomorphism that fixes $X\cap Y$ between the minimum sub-decompositions containing $X\cap Y$ of the $(k-1)$-strong tree decompositions $(\FF_X,\TT_X,\S_X)$ and $(\FF_Y,\TT_Y,\S_Y)$. By Lemma~\ref{lem:commutes}(i), the projected distribution on each of these minimum sub-decompositions is the associated distribution. Moreover, by Lemma~~\ref{lem:commutes}(ii), since the two sub-decompositions are isomorphic, the distributions are preserved by this isomorphism and so agree on $X \cap Y$. Hence, this completes the definition of $k$-associated distribution.

\begin{proof}[Proof of Theorem \ref{thm:main_general}]
By Lemma~\ref{lem:sido_maxdegree}, our goal is to prove that, for each $k$-strongly tree decomposable graph $H$ and every graph $G$ with maximum degree at most $\frac{4|E(G)|}{|V(G)|}$,
there exists a random homomorphism $Y=(Y_1,Y_2,\cdots,Y_{|V(H)|})$ on $\Hom(H,G)$ such that
\begin{align}\label{eq:rightbound}
\HH(Y) \geq |E(H)|\log \left(\frac{2|E(G)|}{|V(G)|^2}\right)+|V(H)|\log |V(G)| +c_H,
\end{align}
where $c_H$ depends only on $H$.
If this holds, then the fact that $\HH(Y)\leq\log|\Hom(H,G)|$ verifies the hypothesis of Lemma~\ref{lem:sido_maxdegree} and, hence, Sidorenko's conjecture holds for $H$.

We claim that for each $k$, every random homomorphism $Y$ on $\Hom(H,G)$
with the $k$-associated distribution of the $k$-strong tree decomposition $(\FF,\TT,\S)$ of $H$
satisfies \eqref{eq:rightbound}.
The proof is by induction on $k$. Suppose that the claim holds up to $k-1$.
Denote by $e_A$ the number of edges induced on $A\subseteq V(H)$. 
Theorem~\ref{thm:tree_entropy} (which may be applied because of the argument above)  and the induction hypothesis then imply that, for some constant $c'_H$, 
\begin{align}\nonumber
		\HH(Y_1,\cdots,Y_k)
		&=\sum_{F\in\FF}\HH((X_{i;F})_{i\in F})
		-\sum_{AB\in E(\TT)}\HH((X_{i;A})_{i\in A\cap B})\\ 
		\label{eq:calculus}
		&\geq 
		\log \left(\frac{2|E(G)|}{|V(G)|^2}\right)
		\sum_{F\in\FF}
		e_F+\log |V(G)|\sum_{F\in\FF}|F| + c'_H
		-\sum_{AB\in E(\TT)}\HH((X_{i;A})_{i\in A\cap B}).
	\end{align} 
Since $H[A\cap B]$ is always a forest,
the fact that $G$ has maximum degree at most $\frac{4|E(G)|}{|V(G)|}$  
implies that
\begin{align*}
|\Hom(H[A\cap B],G)|&\leq 
|V(G)|^{|A\cap B|}\left(\frac{4|E(G)|}{|V(G)|^2}\right)^{e_{A\cap B}}\\
&=2^{e_{A\cap B}}|V(G)|^{|A\cap B|}\left(\frac{2|E(G)|}{|V(G)|^2}\right)^{e_{A\cap B}}.
\end{align*}
Taking logs, we obtain the upper bound
\begin{align*}
	\HH((X_{i;A})_{i\in A\cap B})
	\leq |A\cap B|\log |V(G)| +e_{A\cap B}\log \left(\frac{2|E(G)|}{|V(G)|^2}\right)+e_{A\cap B}.
\end{align*}
Plugging this into \eqref{eq:calculus} yields the desired bound~\eqref{eq:rightbound}.
\end{proof}

\bibliographystyle{abbrv}
\bibliography{references}

\section*{Appendix}

The following simple fact is immediate from the definition of tree decompositions.

\begin{lemma}\label{lem:subtree}
Let $(\FF,\TT)$ be a tree decomposition of $H$ and let $v\in V(H)$.
Denote by $\FF(v)$ the subfamily $\{X\in \FF: v\in X\}$ of $\FF$.
Then $\FF(v)$ always induces a subtree of $\TT$.
\end{lemma}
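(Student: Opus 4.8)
The plan is to derive the lemma from the path form of the tree-decomposition axioms: for any $A,B,C\in\FF$ with $C$ lying on the unique $A$--$B$ path in $\TT$, one has $A\cap B\subseteq C$. (This is exactly condition (2) in the definition of a Markov tree, and every tree decomposition is in particular a Markov tree; if instead the chosen definition of tree decomposition states directly that the bags containing a fixed vertex span a subtree, then the lemma is that axiom verbatim and there is nothing to prove, which is presumably why the authors call it immediate.)

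Granting the path condition, the steps are as follows. First, $\FF(v)\neq\emptyset$, since $\bigcup_{X\in\FF}X=V(H)$ forces $v$ into some bag. Second, I claim $\FF(v)$ induces a connected subgraph of $\TT$: given $A,B\in\FF(v)$ and any $C$ on the $A$--$B$ path in $\TT$, from $v\in A$ and $v\in B$ we get $v\in A\cap B\subseteq C$, hence $C\in\FF(v)$; so $\FF(v)$ contains every intermediate vertex of every path between two of its elements. Third, a nonempty vertex subset of a tree that is closed under taking intermediate vertices of paths induces a connected subgraph, and a nonempty connected subgraph of a tree is again a tree. Therefore $\FF(v)$ induces a subtree of $\TT$, as claimed.

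There is essentially no obstacle here: the only mild point is the standard fact used in the third step — that for a subset $S$ of the vertex set of a tree, $S$ spanning a subtree is equivalent to $S$ being nonempty and closed under intermediate vertices of paths — together with the bookkeeping of invoking the path version of the axioms rather than, say, only the edge-covering property of a tree decomposition.
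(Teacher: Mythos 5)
Your argument is correct and matches the paper's treatment: the paper simply declares this fact ``immediate from the definition of tree decompositions,'' and your derivation (the path condition $A\cap B\subseteq C$ forces every bag on the path between two bags containing $v$ to contain $v$, so $\FF(v)$ is closed under intermediate vertices and hence induces a subtree) is exactly the standard one-line justification being left implicit. Nothing further is needed.
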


We also need the following folklore lemma.

\begin{lemma}\label{lem:helly}
Let $T$ be a tree and let $T_1, \dots, T_k$ be subtrees which pairwise intersect. Then $\cap_{i=1}^k T_i$ is non-empty.
\end{lemma}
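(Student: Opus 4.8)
The plan is to root the tree and locate a single vertex that must lie in every $T_i$. Fix an arbitrary vertex $r\in V(T)$ as a root, so that each vertex has a depth (its distance to $r$) and every connected subgraph $S$ of $T$ has a unique vertex of minimum depth, which I will call the \emph{top} of $S$. Two facts, both immediate from the fact that a tree has unique paths and that a connected subgraph is closed under taking paths between its vertices, will drive the argument: every vertex of $S$ is a descendant of the top of $S$, and for any $w\in S$ the path from $w$ to $r$ meets $S$ in exactly the portion from $w$ up to the top of $S$.

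Let $v_i$ be the top of $T_i$ and, relabelling if necessary, assume $v_1$ has maximum depth among $v_1,\dots,v_k$. I claim $v_1\in\cap_{i=1}^k T_i$. Fix $i$; by hypothesis there is a vertex $w\in T_1\cap T_i$. Since $w\in T_1$, the vertex $v_1$ lies on the path from $w$ to $r$, and since $w\in T_i$, the vertex $v_i$ also lies on that path. Any two vertices on the path from $w$ to $r$ are comparable in the ancestor order, and since $\mathrm{depth}(v_i)\le\mathrm{depth}(v_1)$, the vertex $v_i$ is an ancestor of $v_1$ (possibly equal). Hence $v_1$ lies on the path from $w$ to $v_i$, which is contained in $T_i$ because $T_i$ is connected and contains both $w$ and $v_i$. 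Thus $v_1\in T_i$; as $i$ was arbitrary, $v_1$ lies in the common intersection, which is therefore non-empty.

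The only place the tree structure genuinely enters is the claim that the top of a connected subgraph is well defined and that the path from $w$ to $r$ passes through it; I would spend a sentence making this precise (e.g. by a short induction on path length, or simply by invoking uniqueness of paths together with closure of connected subgraphs under internal paths), after which the rest is bookkeeping with the depth function, so I do not expect a serious obstacle. As an alternative one could induct on $k$: the case $k=2$ is the hypothesis, the case $k=3$ follows by taking the median in $T$ of three witnesses $a\in T_1\cap T_2$, $b\in T_2\cap T_3$, $c\in T_1\cap T_3$ (which lies on the paths $ab\subseteq T_2$, $bc\subseteq T_3$, $ca\subseteq T_1$), and the inductive step applies the $k=3$ case to show that the subtrees $T_i\cap T_k$ with $1\le i\le k-1$ still pairwise intersect, reducing to $k-1$ subtrees. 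I would present the rooting argument as the main proof, since it avoids medians and yields the conclusion in one step.
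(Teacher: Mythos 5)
Your argument is correct. Note that the paper itself offers no proof of this lemma at all --- it is cited as ``folklore'' in the appendix and used as a black box in the proof of Proposition~\ref{prop:subtree} --- so you are supplying a proof where the authors left one to the reader, and your rooting argument is a standard, complete way to do it. The key points all check out: a connected subgraph of a tree is closed under taking the (unique) path between any two of its vertices, so the minimum-depth vertex (your ``top'') of each $T_i$ is well defined and is an ancestor of every vertex of $T_i$; any two ancestors of a common vertex $w$ are comparable, so choosing $v_1$ of maximum depth forces $v_1$ to lie on the path from $w\in T_1\cap T_i$ up to $v_i$, which is contained in $T_i$. Your alternative route (the $k=3$ case via the median of three witnesses, followed by induction using the subtrees $T_i\cap T_k$) is also sound, with the one small point that it tacitly uses the fact that the intersection of two subtrees of a tree is again a subtree (connected); that deserves a sentence if you ever write that version out, but since you present the rooting argument as the main proof, there is no gap.
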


Using these, we give a proof of Proposition~\ref{prop:subtree}.

\begin{proof}[Proof of Proposition~\ref{prop:subtree}]
By Lemma~\ref{lem:subtree}, $\FF(u)=\{X\in\FF:u\in X\}$ induces a subtree of $\TT$.
Let $U=\{u_1,u_2,\cdots,u_t\}$. We use induction on $t$.
For brevity, we say that $\FF'$ is \emph{good} for $U$ if $\cup_{F\in\FF'}F$ contains $U$ and $\FF'$ induces a subtree in $\TT$.
Suppose $t=2$ and $\FF(u_1)$ and $\FF(u_2)$ are disjoint.
Then the vertex set of the shortest path between the two vertex-disjoint 
trees $\TT[\FF(u_1)]$ and $\TT[\FF(u_2)]$ is the unique minimal good subfamily $\FF'$ for $U$.

For $t>2$, suppose that $\cap_{i=1}^{t-1}\FF(u_i)=\emptyset$.
Then, by the induction hypothesis, there exists a minimum good subfamily $\FF''\subseteq\FF$ for $U\setminus\{u_t\}$.
If $\FF''$ intersects $\FF(u_t)$ then let $\FF':=\FF''$.
This $\FF'$ is good for $U$ 
and, moreover, it is the minimum such family,
since every good subfamily for $U$ is again good for $U\setminus\{u_t\}$.  
Otherwise, if $\FF''$ and $\FF(u_t)$ are disjoint,
add the vertex set of the shortest path between $\FF''$ and $\FF(u_t)$ to $\FF''$.
Then the new subfamily $\FF'$ induces a tree and hence is good for $U$.
It is also minimal and unique, because every good subfamily $\FF_0$ for $U$ must contain $\FF''$ and, therefore, the shortest path from $\FF''$ to $\FF(u_t)$.

Suppose now that $\cap_{i=1}^{t-1}\FF(u_i)$ is non-empty. 
Let $\FF'$ be the vertex set of the shortest path from $\cap_{i=1}^{t-1}\FF(u_i)$
to $\FF(u_t)$.
We claim that $\FF'$ is the desired minimum good subfamily $\FF'$ for $U$.
As each good subfamily $\FF''$ for $U$ intersects every $\FF(u_i)$, $1\leq i\leq t$, Lemma~\ref{lem:helly} implies that $\FF''$ also intersects both $\cap_{i=1}^{t-1}\FF(u_i)$ and $\FF(u_t)$.
Thus, every $\FF''$ good for $U$ must contain $\FF'$.
It remains to prove that $U$ is contained in $\cup_{F\in\FF'}F$, as 
$\FF'$ already induces a path in $\TT$.
Firstly, the end vertex of the path $\FF'$ is in $\FF(u_t)$ and hence contains $u_t$. On the other hand, since every element of $\cap_{i=1}^{t-1}\FF(u_i)$ contains $U\setminus\{u_t\}$,
the starting vertex of $\FF'$ contains $U\setminus\{u_t\}$.
\end{proof}

\end{document}